\theoremstyle{plain}
\newtheorem{thm}{Theorem}[section]
\newtheorem{prop}[thm]{Proposition}
\newtheorem{lem}[thm]{Lemma}
\theoremstyle{definition}
\newtheorem{df}[thm]{Definition}
\newtheorem{rem}[thm]{Remark}
\newtheorem{ex}[thm]{Example}
\newcommand{\FF}{\mathbb{F}}
\newcommand{\RR}{\mathbb{R}}
\newcommand{\I}{\mathcal{I}}
\newcommand{\Z}{\mathbb{Z}}
\newcommand{\R}{\mathbb{R}}
\def\bm#1{\mathbf{#1}}
\DeclareMathOperator{\supp}{supp}
\DeclareMathOperator{\wt}{wt}
\DeclareMathOperator{\Harm}{Harm}
\DeclareMathOperator{\Hom}{Hom}
\begin{document}

\title{{
%A note on E-polynomials
Harmonic Tutte Polynomials of Matroids
%\footnote{This work was supported by JSPS KAKENHI (18K03217, 17K05164).}
%from the point of view of Oura's conjecture 
}
%\footnote{
%This work was supported by JSPS KAKENHI Grant Number 22840003, 24740031.
%}
%\\
%{\bf Version 1.4} 
}

\author[Chakraborty]{Himadri Shekhar Chakraborty*}
\thanks{*Corresponding author}
\address
	{
%		(1) Graduate School of Natural Science and Technology\\
%		Kanazawa University\\  
%		Ishikawa 920-1192, Japan\\
%		(2) 
		Department of Mathematics, Shahjalal University of Science and Technology\\ Sylhet-3114, Bangladesh\\
	}
\email{himadri-mat@sust.edu}

\author[Miezaki]{Tsuyoshi Miezaki}
\address
	{
		Faculty of Science and Engineering, 
		Waseda University, 
		Tokyo 169-8555, Japan\\ %(Corresponding author)
	}
\email{miezaki@waseda.jp} 

\author[Oura]{Manabu Oura}
\address
	{
			Institute of Science and Engineering, 
			Kanazawa University,  
			Ishikawa 920-1192, Japan
	}
\email{oura@se.kanazawa-u.ac.jp} 

\date{}
\maketitle

\begin{abstract}
In the present paper, 
we introduce the concept of harmonic Tutte polynomials of matroids and 
discuss some of their properties. 
In particular, we generalize Greene's theorem,
thereby expressing harmonic weight enumerators of codes as evaluations of 
harmonic Tutte polynomials. 
\end{abstract}

{\small
\noindent
{\bfseries Key Words:}
Tutte polynomials, weight enumerators, matroids, codes, harmonic functions.\\ \vspace{-0.15in}

\noindent
2010 {\it Mathematics Subject Classification}. 
Primary 05B35;
Secondary 94B05, 11T71.\\ \quad
}

%\noindent
%2010 {\it Mathematics Subject Classification}.
%Primary 11F30;
%Secondary 20D08, 11F27.\\ \quad

\section{Introduction}

%The classification 
%Construction of the matroids from various algebraic structures, 
%like graphs and matrices (see~\cite{Oxley1992}), 
%is one of the most important problems in the theory of matroids.
In 1954, Tutte~\cite{Tutte1954} introduced a celebrated graph polynomial
which is originally called the dichromatic polynomial (also see~\cite{Tutte1967}), now known as the Tutte polynomial,
plays a key role in the study of the graph properties related to counting problems. 
%It is Crapo~\cite{Crapo} who pointed out this problem by extending
%the definition of Tutte polynomial $T(M;x,y)$ to a matroid~$M$ 
%which was introduced by Tutte~\cite{Tutte1947,Tutte1954,Tutte1967} 
%for a graph. 
In 1969, Crapo~\cite{Crapo} generalised the Tutte polynomial for graphs with
respect to matroids.
Later, Greene~\cite{Greene1976} proved a remarkable connection between 
the weight enumerator $W_{C}(x,y)$ of an $[n,k]$ code $C$ and the Tutte polynomial of its matroid $M_{C}$; this identity, which is known as Greene identity, is as follows:
\[
W_{C}(x,y) 
=
(x-y)^{k}
y^{n-k}
T\left(M_{C}; \dfrac{x+(q-1)y}{x-y},\dfrac{x}{y}\right).
\]
As an application of the above relation, Greene~\cite{Greene1976}
gave an alternative proof of the MacWilliams identity for the 
weight enumerator of an $[n,k]$ code.

Delsarte~\cite{Delsarte} introduced 
%the concept of 
discrete harmonic functions on a finite set.
%In the study of coding theory, 
Bachoc~\cite{Bachoc, BachocNonBinary}
associated the discrete harmonic functions to linear codes by introducing
the 
%concept of 
harmonic weight enumerator $W_{C,f}$ of a linear code~$C$.
%and
Bachoc~\cite{BachocNonBinary} and Tanabe~\cite{Tanabe2001} 
independently both proved 
%gave a striking generalization of the 
a MacWilliams identity for the harmonic weight enumerators of linear codes over~$\FF_{q}$.
%Tanabe~\cite{Tanabe2001} studied the $\FF_{q}$--analogue of 
%the harmonic weight enumerators of codes.

In this paper, we introduce the notion of the harmonic Tutte polynomials
associated with a harmonic function of a certain degree, 
and give a Greene-type identity which we call the \emph{generalized Greene identity},
which relates the harmonic weight enumerator of a code and the harmonic Tutte polynomial
of the matroid corresponding to the code. 
Moreover, as an application of the generalized Greene identity, 
we give a combinatorial proof of Bachoc and Tanabe's MacWilliams-type identity which is stated in Theorem~\ref{thm: Bachoc iden.}.

This paper is organized as follows:
In Section~\ref{Sec:Preli}, 
we present the basic definitions and properties in coding theory and matroid theory used in this paper.  
In Section~\ref{Sec:HarmGenPoly}, 
we define the harmonic Tutte polynomial,
and obtain a relation between the harmonic Tutte polynomials
of a matroid and its dual (Theorem~\ref{Thm:HarmTutteMacIden.}).
Moreover,
we reinterpret the definition of
harmonic weight enumerators of codes (Theorem~\ref{Thm:reinter}). 
In Section~\ref{Sec:GenGreeneIden},
we give a generalization of Greene identity (Theorem~\ref{thm:Miezaki})
with an application in the proof of the MacWilliams identity for
harmonic weight enumerator. 
Finally, in Section~\ref{sec:rem}, 
we conclude the paper with some remarks. 

%All computer calculations in this paper were done with the help of Magma~\cite{Magma}
%and Mathematica~\cite{Mathematica}.

\section{Basic definitions and notions}\label{Sec:Preli}

In this section, we give some basic definitions and properties of codes and matroids 
that are necessary for this paper. We follow~\cite{HP2003, MS1977, MOSS2019, Oxley1992, Tanabe2001} for the discussions. Moreover, we recall some definitions and properties of the (discrete) harmonic functions; see~\cite{Bachoc, Delsarte, Tanabe2001} for more detail.

\subsection{Discrete harmonic functions}
 
Let $E := \{1,2,\ldots,n\}$ where $n$ is a positive integer.
%(in future, if there is no confusion, a finite set $E$ with cardinality~$n$ 
%denotes the same).
Let $2^E$ denote the set of all subsets of $E$.
We define  
$E_{d} := \{ X \in 2^{E} \mid |X| = d\}$
for $d = 0,1, \ldots, n$. 
We denote by 
$\R 2^{E}$ and $\R E_{d}$
the real vector spaces spanned by the elements of  
$2^{E}$ and $E_{d}$,
respectively. 
An element of 
$\R E_{d}$
is denoted by
\begin{equation}\label{Equ:FunREd}
	f :=
	\sum_{Z \in E_{d}}
	f(Z) Z
\end{equation}
with coefficients $f(Z) \in \RR$. 
Thus $\RR E_{d}$ is identified with the 
real-valued function on $E_{d}$ given by 
$Z \mapsto f(Z)$. 
Such an element 
$f \in \R E_{d}$
can be extended to an element 
$\widetilde{f}\in \R 2^{E}$
by setting, for all 
$X \in 2^{E}$,
\begin{equation}\label{Equ:TildeF}
	\widetilde{f}(X)
	:=
	\sum_{Z\in E_{d}, Z\subset X}
	f(Z).
\end{equation}
Note that 
$\widetilde{f}(X) = 0$
for any $X \in 2^{E}$ such that $|X| < d$. 
If an element 
$g \in \R 2^{E}$
is equal to
$\widetilde{f}$ 
for some
$f \in \R E_{d}$, then
we say that $g$ has degree $d$. 
We call the vector space $\R E_{d}$ the \emph{homogeneous space} of degree~$d$,
and denote it by~$\Hom_{d}(n)$.
The differentiation $\gamma$ is the operator on $\RR E_{d}$ defined by 
linearity from the identity 
\begin{equation}\label{Equ:Gamma}
	\gamma(Z) := 
	\sum_{Y\in {E}_{d-1}, Y\subset Z} 
	Y
\end{equation}
for all 
$Z \in E_{d}$
and for all $d=0,1, \ldots n$. 
Also, $\Harm_{d}(n)$ is the kernel of~$\gamma$:
\[
	\Harm_{d}(n)
	:= 
	\ker
	\left(
	\gamma\big|_{\R E_{d}}
	\right).
\]
 
 \begin{rem}[\cite{Bachoc,Delsarte}]\label{Rem:Gamma}
 	Let $f \in \Harm_{d}(n)$. 
 	Then $\gamma^{d-i}(f) = 0$ for all $0 \leq i \leq d-1$.
 	This means from definition~(\ref{Equ:Gamma})
 	that
 	$$\sum_{X \in E_{i}}
 	\left(
 	\sum_{\substack{Z \in E_{d}, X \subseteq Z}} 
 	f(Z)
 	\right) 
 	X = 0.$$
 	This implies that
 	$\sum_{\substack{Z \in E_{d}, X \subset Z}} f(Z) = 0$
 	for any $X \in E_{i}$.
 \end{rem}

\begin{rem}\label{Rem:New}
	Let~$f \in \Harm_{d}(n)$. 
	Since $\sum_{Z \in E_{d}} f(Z) = 0$, 
	then it is easy to check 
	from~(\ref{Equ:Gamma}) that
	%$\sum_{X \subseteq E, |X| = t} \widetilde{f}(X) = 0$
	$\sum_{X \in E_{t}} \widetilde{f}(X) = 0$, where
	$1 \leq d \leq t \leq n$.
\end{rem}

To make the the above definitions and remarks related to harmonic functions easier to understand, we give the following example.

\begin{ex}\label{Ex:HarmonicFunction}
	Let $E = \{1,2,3\}$ and $d =1$. 
	Let $f \in \RR E_{1}$ be the element
	\[
		f 
		= 
		a \{1\}
		+
		b \{2\}
		+
		c \{3\},
	\]
	where $a = f(\{1\})$, $b = f(\{2\})$ and $c = f(\{3\})$. Then 
	$\gamma(f)= (a + b + c) \emptyset$.
	Suppose that $f \in \Harm_{1}(3)$. 
	This implies $\gamma(f) = 0$. 
	So, $a + b + c = 0$. That is, $c = -(a+b)$. 
	Hence
	\begin{equation*}
		\Harm_{1}(3)
		\ni
		f
		=
		a\{1\}+b\{2\}-(a+b)\{3\}.		
	\end{equation*}
	For $f \in \Harm_{1}(3)$, we have the following $\widetilde{f}$'s by (\ref{Equ:TildeF}):
	\begin{table}[h!]
		\begin{center}
			\begin{tabular}{l l l l}
				$\widetilde{f}(\emptyset) = 0$, & $\widetilde{f}(\{1\}) = a$, & $\widetilde{f}(\{2\}) = b$, & $\widetilde{f}(\{3\}) = -(a+b)$\\
				$\widetilde{f}(\{1,2\}) = a+b $, & $\widetilde{f}(\{1,3\}) = -b$, & $\widetilde{f}(\{2,3\}) = -a$, & $\widetilde{f}(\{1,2,3\}) = 0$.
			\end{tabular}
		\end{center}
		\label{Tab:ftilde}	
	\end{table} 
\end{ex}

\subsection{Linear codes}

Let $\FF_{q}$ be a finite field of order~$q$,
where $q$ is a prime power. 
Then $V:=\FF_{q}^{n}$ denotes the vector space of dimension~$n$ 
with ordinary inner product:
$$\bm{u}\cdot\bm{v} := u_{1}v_{1} + \cdots + u_{n}v_{n}$$
for $\bm{u},\bm{v} \in V$,
where
$\bm{u} = (u_{1},\ldots,u_{n})$ and $\bm{v} = (v_{1},\ldots,v_{n})$.
Let
$\supp(\bm{u}) := \{i\in E \mid u_{i} \neq 0\}$
and
$\wt(\bm{u}) := |\supp(\bm{u})|$
for $\bm{u} \in V$.
Let $V_{d} := \{\bm{u} \in V \mid \wt(\bm{u}) = d\}$. 
%We define
%$$\bm{u} \ast \bm{v} := \# \{i \in E \mid i \in \supp(\bm{u}) \cap \supp(\bm{v})\}.$$
An element 
$f \in \R E_{d}$
can be extended to an element 
$\acute{f}\in \R V$
by setting, for all 
$\bm{u} \in V$,
\begin{equation}\label{Equ:AcuteF}
	\acute{f}(\bm{u})
	:=
	\sum_{\substack{\bm{v}\in V_{d},\\ \supp(\bm{v})\subset \supp(\bm{u})}}
	f(\supp(\bm{v})).
\end{equation}

An $\FF_{q}$-\emph{linear code} of length~$n$ is a linear subspace of $V$. 
An $\FF_q$-linear code of length~$n$ with dimension~$k$ 
is called an $[n,k]$ linear code.
%In the rest of this paper,
%by a code we mean an $\FF_{q}$--linear code. 
Let $C$ be an $\FF_{q}$-linear code. 
We denote by $C^{\perp}$ the \emph{dual code} of $C$, defined as:
\[
	C^{\perp}
	:=
	\{
	\bm{u} \in V
	\mid
	\bm{u} \cdot \bm{v} = 0
	\mbox{ for all }
	\bm{v} \in C
\}.
\] 
The \emph{weight distribution} of $C$ is the sequence 
$\{A_{i}\mid i=0,1, \dots, n \}$, 
where $A_{i}$ is the number of codewords of weight $i$. 
The polynomial
\[
	W_C(x,y) 
	:= 
	\sum_{{\bm{u}}\in C}
	x^{n - \wt({\bm{u}})}
	y^{\wt(\bm{u})}
	=
	\sum^{n}_{i=0} 
	A_{i} x^{n-i} y^{i}
\]
is called the \emph{weight enumerator} of $C$
and satisfies the MacWilliams identity:
\[
	W_{C^{\perp}}(x,y)
	=
	\dfrac{1}{|C|}
	W_{C}(x+(q-1)y,x-y).
\]

Bachoc~\cite{Bachoc} introduced the concept of harmonic weight enumerator for a
binary code which was later defined for an $\FF_{q}$-linear code by Tanabe~\cite{Tanabe2001} as follows.

\begin{df}\label{DefHarmWeightBachoc}
	Let $C$ be an $\FF_{q}$-linear code of length $n$. Let $f\in\Harm_{d}(n)$. 
	The \emph{harmonic weight enumerator} associated with $C$ and $f$ is
	\[
		W_{C,f}(x,y) 
		:=
		\sum_{{\bf u}\in C}
		\acute{f}({\bf u})
		x^{n-\wt({\bf u})}
		y^{\wt({\bf u})}.
	\]
\end{df}

\begin{thm}[\cite{Tanabe2001}, MacWilliams type identity]\label{thm: Bachoc iden.} 
	Let $W_{C,f}(x,y)$ be 
	the harmonic weight enumerator of an $\FF_{q}$-linear code $C$ 
	associated to $f \in \Harm_{d}(n)$. Then 
	\[
		W_{C,f}(x,y) = (xy)^{d} Z_{C,f}(x,y),
	\]
	where $Z_{C,f}$ is a homogeneous polynomial of degree $n-2d$, and satisfies
	\[
		Z_{C^{\perp},f}
		(x,y)
		= 
		(-1)^{d} 
		\frac{q^{n/2}}{|C|} 
		Z_{C,f} 
		\left( 
		\frac{x+(q-1)y}{\sqrt{q}}, 
		\frac{x-y}{\sqrt{q}} 
		\right).
	\]
\end{thm}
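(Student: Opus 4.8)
The plan is to prove both assertions simultaneously by a Fourier/Poisson-summation argument on $\FF_{2}^{n}$, exactly as in the classical proof of MacWilliams (the route underlying Greene's identity), but twisted by the harmonic function $f$. Write $f=\sum_{|Z|=d}c_{Z}\,Z$ and identify it with its multilinear extension $f(t_{1},\dots,t_{n})=\sum_{|Z|=d}c_{Z}\prod_{i\in Z}t_{i}$, a homogeneous polynomial of degree $d$; for a codeword $\bm{u}$ one has $f(\bm{u})=\sum_{Z\subseteq\supp(\bm{u})}c_{Z}$, the value of this polynomial at the $0/1$ indicator of $\supp(\bm{u})$. Setting $g_{x,y}(\bm{u}):=f(\bm{u})\,x^{n-\wt(\bm{u})}y^{\wt(\bm{u})}$ so that $W_{C,f}(x,y)=\sum_{\bm{u}\in C}g_{x,y}(\bm{u})$, I would invoke Poisson summation $\sum_{\bm{u}\in C}g_{x,y}(\bm{u})=\frac{|C|}{2^{n}}\sum_{\bm{v}\in C^{\perp}}\widehat{g_{x,y}}(\bm{v})$, where $\widehat{g_{x,y}}(\bm{v})=\sum_{\bm{u}\in\FF_{2}^{n}}(-1)^{\bm{u}\cdot\bm{v}}g_{x,y}(\bm{u})$.

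The crucial lemma, which I expect to be the main obstacle, is the reinterpretation of harmonicity. Writing $\partial:=\sum_{k=1}^{n}\partial_{t_{k}}$, the harmonic condition $\sum_{j}c_{Y\cup\{j\}}=0$ for every $(d-1)$-set $Y$ is equivalent to $\partial f\equiv 0$; hence $f$ is invariant under translation along the all-ones direction, $f(\bm{t}+\lambda\allone)=f(\bm{t})$ for all $\lambda$. Applied to $\bm{t}=\alpha\allone+(\beta-\alpha)\mathbf{1}_{s}$ (with $s:=\supp(\bm{v})$) this collapses to $f(\bm{t})=(\beta-\alpha)^{d}f(\bm{v})$. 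This single identity carries the whole argument: specializing $s$ small, respectively $s^{c}$ small, it also shows $f(\bm{v})=0$ whenever $\wt(\bm{v})<d$ or $\wt(\bm{v})>n-d$, which is what forces the divisibility in the first assertion.

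Next I would compute the transform. Since $x^{n-\wt(\bm{u})}y^{\wt(\bm{u})}=\prod_{i}x^{1-u_{i}}y^{u_{i}}$ and $(-1)^{\bm{u}\cdot\bm{v}}=\prod_{i}(-1)^{u_{i}v_{i}}$ both factor over coordinates, the $f$-twisted transform factors into one-variable sums and reorganizes as $\widehat{g_{x,y}}(\bm{v})=y^{d}(x+y)^{n-\wt(\bm{v})}(x-y)^{\wt(\bm{v})}f(\bm{t})$, where $t_{i}=-1/(x-y)$ for $i\in s$ and $t_{i}=1/(x+y)$ otherwise. Applying the lemma with $\alpha=1/(x+y)$, $\beta=-1/(x-y)$, so that $\beta-\alpha=-2x/(x^{2}-y^{2})$, yields the master formula
\[
\widehat{g_{x,y}}(\bm{v})=(-2)^{d}(xy)^{d}(x+y)^{\,n-d-\wt(\bm{v})}(x-y)^{\,\wt(\bm{v})-d}f(\bm{v}).
\]

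Finally I would feed this into Poisson summation. The common factor $(xy)^{d}$ comes out, proving $W_{C,f}=(xy)^{d}Z_{C,f}$, and combined with the vanishing of $f(\bm{v})$ outside $d\le\wt(\bm{v})\le n-d$ it shows $Z_{C,f}$ is a genuine homogeneous polynomial of degree $n-2d$; explicitly
\[
Z_{C,f}(x,y)=(-1)^{d}\frac{2^{d}|C|}{2^{n}}\sum_{\bm{v}\in C^{\perp}}f(\bm{v})(x+y)^{n-d-\wt(\bm{v})}(x-y)^{\wt(\bm{v})-d}.
\]
Recognizing the sum as $Z_{C^{\perp},f}(x+y,x-y)$ gives the transformation up to constants; applying the relation to $C^{\perp}$ (whose dual is $C$), using $|C|\,|C^{\perp}|=2^{n}$, and rescaling by homogeneity via $Z_{C,f}(x+y,x-y)=2^{(n-2d)/2}Z_{C,f}\!\left(\frac{x+y}{\sqrt{2}},\frac{x-y}{\sqrt{2}}\right)$ then delivers precisely the stated identity, with the sign $(-1)^{d}$, the factor $2^{n/2}/|C|$, and the $\sqrt{2}$-normalized arguments. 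A sanity check at $d=0$, where $f$ is constant and $Z_{C,f}=W_{C,f}$, recovers the ordinary MacWilliams identity and fixes all constants.
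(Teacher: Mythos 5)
Your proposal is correct, but it proves Theorem~\ref{thm: Bachoc iden.} by a genuinely different route than this paper does. The paper deliberately derives the theorem from its matroid machinery: the generalized Greene identity (Theorem~\ref{thm:Miezaki}) expresses $Z_{C,f}$ through the harmonic Tutte polynomial of $M_{C}$, and the proof chains this with the duality $T(M^{\perp},f;x,y)=(-1)^{d}T(M,f;y,x)$ (Theorem~\ref{Thm:HarmTutteMacIden.}) and the fact $M_{C}^{\perp}=M_{C^{\perp}}$, while the divisibility claim $W_{C,f}=(xy)^{d}Z_{C,f}$ rests on Lemma~\ref{Lem:Bachoc} and Remark~\ref{Rem:BachocLem}, which the paper imports from Bachoc without proof. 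You instead run a direct Poisson-summation argument on $\FF_{2}^{n}$ --- essentially Bachoc's original analytic route --- and your computation is sound: the coordinatewise factorization giving $\widehat{g_{x,y}}(\bm{v})=y^{d}(x+y)^{n-\wt(\bm{v})}(x-y)^{\wt(\bm{v})}f(\bm{t})$ at the two-valued point $\bm{t}$, the master formula, and the final bookkeeping via $|C|\,|C^{\perp}|=2^{n}$ and homogeneity of degree $n-2d$ all check out and yield exactly the stated sign and constants (the $d=0$ sanity check is a good touch). Your key lemma --- harmonicity of $f$ is equivalent to $\sum_{k}\partial_{t_{k}}f\equiv 0$, hence the multilinear extension is invariant under translation along $\allone$, so $f(\alpha\allone+(\beta-\alpha)\mathbf{1}_{s})=(\beta-\alpha)^{d}\widetilde{f}(\bm{v})$ --- is in fact exactly equivalent to Lemma~\ref{Lem:Bachoc}: expanding the left side as $\sum_{i}f^{(i)}(s)\beta^{i}\alpha^{d-i}$ and comparing coefficients of $\alpha^{d-i}\beta^{i}$ with $(\beta-\alpha)^{d}\widetilde{f}(\bm{v})$ recovers $f^{(i)}=(-1)^{d-i}\binom{d}{i}\widetilde{f}$; so your argument is self-contained precisely where the paper quotes Bachoc. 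What each approach buys: yours gives in one stroke the explicit closed form of $Z_{C,f}$ as a sum over $C^{\perp}$ and needs no matroid theory; the paper's route is longer but exhibits the MacWilliams-type identity as a shadow of a Greene-type identity plus matroid duality, which is the article's raison d'\^{e}tre. One cosmetic caveat: your master formula has negative exponents when $\wt(\bm{v})<d$ or $\wt(\bm{v})>n-d$, so it should be asserted for $d\le\wt(\bm{v})\le n-d$ together with the statement $\widehat{g_{x,y}}(\bm{v})=0$ otherwise, which the vanishing of $f(\bm{v})$ in that range (which you do prove) already supplies.
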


Let $C$ be an $\FF_{q}$-linear code of length $n$ and let $f  \in \Harm_{d}(n)$. 
Then the weight distribution of $C$ associated to $f$ is defined as
$$A_{i,f} := \sum_{{\bm{u} \in C},\wt({\bm u}) = i} \acute{f}({\bm u}).$$
Therefore the harmonic weight enumerator of $C$ associated with $f$ can be
rewritten as
	\[
	W_{C,f}(x,y) 
	=
	\sum_{i=0}^{n} 
	A_{i,f} x^{n-i} y^{i}.
	\] 
Now from the above definition we have by Theorem~\ref{thm: Bachoc iden.},
$$Z_{C,f}(x,y) = \sum_{i = 0}^{n} A_{i,f} x^{n-i-d}y^{i-d}.$$

\begin{rem}
	If $\deg f = 0$, then we have $A_{i,f} = A_{i}$, that is, $W_{C,f}(x,y)$ becomes the usual weight enumerator $W_{C}(x,y)$.
\end{rem}

\begin{ex}\label{Ex:HarmWeight}
	Let $C$ be a binary linear code of length~$3$.
	The elements of $C$ are listed as follows:
	\[
		(0,0,0), (0,0,1), (1,1,0), (1,1,1).
	\]
	We consider $f \in \Harm_{1}(3)$ as computed in Example~\ref{Ex:HarmonicFunction}. Then by direct computation, 
	we have the harmonic weight enumerator of~$C$ associated to~$f$ 
	is as follows	
	\[
		W_{C,f} 
		= 
		-(a+b) x^2 y + (a+b) x y^2 = (xy)^{1} Z_{C,f},
	\]
	where $Z_{C,f} = (a+b)(y-x)$.
\end{ex}

\subsection{Matroids}

The matroids can be defined in several equivalent ways.
We prefer the definition which is in terms of independent sets. 
%Let $E$ be a finite set of cardinality~$n$ and 
%$2^{E}$ denotes the set of all subsets of $E$. 
A (finite) \emph{matroid} 
$ M $ is an ordered pair $ (E, \I) $ consisting of set $E$
and a collection $ \I $ of subsets of $E$ 
satisfying the following conditions:  
\begin{itemize}
	\item[(I1)] 
	$ \emptyset \in \I $,
	\item[(I2)] 
	if $ I \in \I $ 
	and 
	$ J \subset I $, 
	then 
	$ J \in \I $, and
	\item[(I3)] 
	if $ I, J \in \I $ 
	with $ |I| < |J| $, 
	then there exists 
	$ j \in J \setminus I $ 
	such that 
	$ I \cup \{ j \} \in \I $.
\end{itemize}

The elements of $ \I $ are called the \emph{independent} 
sets of $ M $, 
and $ E $ is called the \emph{ground set} of $ M $. 
A subset of the ground set $ E $ 
that does not belong to $ \I $ is called \emph{dependent}. 
An independent set is called a \emph{basis} 
if it is not contained in any other independent set. 

It follows from axiom (I3) that the cardinalities of all bases in a
matroid $M$ are equal; this cardinality is called the \emph{rank} of $M$. 
%These maximal independent sets are called the \emph{bases} of $M$. 
The \emph{rank} $\rho(J)$ of an arbitrary subset $J$ of $E$ 
is the size of the largest independent subset of~$J$.
That is, 
$
	\rho(J) 
	:= 
	\max 
	\{ 
		|I| : 
		I \in \I
		\text{ and }
		I \subset J
	\}.
$
%This implies 
%$ \rho $ 
%maps 
%$ 2^{E} $ 
%into 
%$\mathbb{Z}$. 
%This function $ \rho $ is called the \emph{rank function} of $ M $. 
In particular, 
$ \rho(\emptyset) = 0 $.
We call
$\rho(E)$ the rank of~$M$.
We refer the readers to~\cite{Oxley1992} for more information on matroids. 

\begin{df}
	Let~$M$ be a matroid on the set $E$ having a rank function $\rho$.
	The \emph{Tutte polynomial} of $M$ is defined as follows:
	%in~\cite{Tutte1947,Tutte1954,Tutte1967}:
	\[
		T(M;x,y)
		:=
		\sum_{J \subset E}
		(x-1)^{\rho(E)-\rho(J)}
		(y-1)^{|J|-\rho(J)}.
	\]
\end{df}

\begin{df}
	Let $A$ be a $k \times n$ matrix over a finite field $\FF_{q}$. 
	This gives a matroid $M[A]$ on the set $E$
	in which a set $I$ is independent if and only if the family of 
	columns of~$A$ whose indices belong to $I$ is linearly independent. 
	Such a matroid is called a \emph{vector matroid}.
\end{df}

For an $\FF_{q}$-linear code $C$, $M_{C}$ denotes the vector matroid that 
corresponds to~$C$. 
In the rest of this note,
we prefer to call $M_{C}$ as a matroid instead of a vector matroid.
Next we recall this construction, which is
treated in~\cite{JP2013}. 
Let $G$ be a $k \times n$ matrix with rank~$k$ 
over the finite field $\FF_{q}$. The set $E$ is
indexing the columns of $G$. Let $\I_{G}$ 
be the family of all subsets~$I$ of $E$
for which the columns of $G$ indexed by $I$
%the submatrix $G_{J}$ consisting of 
%the columns of $G$ 
%at the positions of $J$ 
are independent. 
Then $M_{G} := (E, \I_{G})$ is a matroid. 
If
%Suppose that $\FF_{2}$
%is a finite field and 
$G_{1}$ and $G_{2}$ are generator matrices of an $\FF_{q}$-linear code $C$, 
then
$(E, \I_{G_1}) = (E, \I_{G_2})$. 
Therefore, the matroid~$M_{C} := (E, \I_{C})$ of an $\FF_{q}$-linear code $C$ is well
defined by $(E, \I_{G})$ for any generator matrix~$G$ of~$C$.

\begin{ex}\label{Ex:Matroid}
	Let $E =\{1,2,3\}$. 
	Let $C$ be a $[3,2]$ code over~$\FF_{2}$
	(given in Example~\ref{Ex:HarmWeight}) 
	with generator matrix as follows:
	\[
	G =
	\begin{bmatrix}
		1 & 1 & 0 \\
		0 & 0 & 1 
	\end{bmatrix}.
	\]
	The columns of the matrix $G$ are indexed by $E$. 
	Now we compute a family $\I$ of subsets~$I \subset E$
	such that the columns of $G$ indexed by $I$ are independent: 
	$$\I = \{\emptyset, \{1\},\{2\},\{3\},\{1,3\},\{2,3\}\}.$$
	Then the matroid corresponding to~$C$ is $M_{C} = (E,\I)$.
\end{ex}

\section{Harmonic generalizations of polynomials}\label{Sec:HarmGenPoly}

\subsection{Harmonic Tutte Polynomials}\label{Sec:HarmTuttePoly}

In this section,
we define the Tutte polynomials of a (finite) matroid $M$ 
associated with a harmonic function. 
We also present a very useful relation between 
the Tutte polynomial of a matroid and its dual 
associated to a harmonic function.

\begin{df}
	Let $M = (E,\I)$ be a matroid with rank function $\rho$,
	and $f \in \Hom_{d}(n)$ be a real-valued function of degree~$d$. 
	Then the \emph{weighted Tutte polynomial} of $M$ 
	associated to~$f$ is defined
	as follows: 
	\[
		T(M,f;x,y)
		:=
		\sum_{J \subset E}
		\widetilde{f}(J)
		(x-1)^{\rho(E)-\rho(J)}
		(y-1)^{|J|-\rho(J)}.
	\] 
	In particular, 
	if $f \in \Harm_{d}(n)$,
	then we call the weighted Tutte polynomial
	$T(M,f;x,y)$ the \emph{harmonic Tutte polynomial}
	associated with~$f$.
\end{df}

We define
\[
	\I^{\ast} 
	:= 
	\{
	I \in 2^{E} 
	\mid 
	I \subset E\setminus A 
	\mbox{ for some } A \in \mathcal{B}(M)
	\},
\]
where
$\mathcal{B}(M)$
be the collection of all bases of $M$.
It is clear by~\cite[Theorem 2.1.1]{Oxley1992}
that $\I^{\ast}$ is the set of independent sets of a matroid on~$E$.
This matroid
$M^{\ast} := (E,\I^{\ast})$ 
is called the \emph{dual matroid}
of $M$.
It is well known that
if~$\rho$ is the rank function of a matroid $M = (E,\I)$, 
then the rank function of $M^{\ast} = (E,\I^{\ast})$
is given as follows: for any $J \subset E$, 
\[
	\rho^{\ast}(J)
	:=
	|J|+\rho(E\setminus J) -\rho(E)
\]
(see~\cite[Proposition 2.1.9]{Oxley1992}).
In particular, 
$\rho^{\ast}(E) + \rho(E) = |E|$.
The correspondence between the harmonic Tutte polynomial of a matroid~$M$ 
and its dual~$M^{\ast}$ associated to a harmonic function
is given as follows:

\begin{thm}\label{Thm:HarmTutteMacIden.}
	Let $M = (E,\I)$ be a matroid with a rank function~$\rho$,
	and let $f \in \Harm_{d}(n)$.
	Then $T(M^{\ast},f;x,y) = (-1)^{d} T(M,f;y,x)$.
\end{thm}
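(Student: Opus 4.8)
The plan is to start from the definition of the harmonic Tutte polynomial of the dual matroid and transform it into the claimed expression by a substitution over subsets. Writing out $T(M^{\perp},f;x,y)$ explicitly, I get
\[
	T(M^{\perp},f;x,y)
	=
	\sum_{J \subset E}
	\widetilde{f}(J)
	(x-1)^{\rho^{\perp}(E)-\rho^{\perp}(J)}
	(y-1)^{|J|-\rho^{\perp}(J)}.
\]
The first step is to substitute the known formula $\rho^{\perp}(J) = |J| + \rho(E\setminus J) - \rho(E)$ together with $\rho^{\perp}(E) = |E| - \rho(E)$ into the two exponents. A direct computation should show that $\rho^{\perp}(E) - \rho^{\perp}(J) = |E\setminus J| - \rho(E\setminus J)$ and $|J| - \rho^{\perp}(J) = \rho(E) - \rho(E\setminus J)$, so that the summand becomes $\widetilde{f}(J)\,(x-1)^{|K|-\rho(K)}(y-1)^{\rho(E)-\rho(K)}$ where $K := E\setminus J$.

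The second step is to reindex the sum by the complementation bijection $J \mapsto K = E\setminus J$, which runs over all subsets of $E$ as $J$ does. Under this reindexing the factor $(x-1)^{|K|-\rho(K)}(y-1)^{\rho(E)-\rho(K)}$ is exactly the summand of $T(M,f;y,x)$ (note the roles of $x$ and $y$ have swapped relative to the original definition), except that the weight attached is $\widetilde{f}(E\setminus K)$ rather than $\widetilde{f}(K)$. So after reindexing I will have
\[
	T(M^{\perp},f;x,y)
	=
	\sum_{K \subset E}
	\widetilde{f}(E\setminus K)
	(x-1)^{|K|-\rho(K)}
	(y-1)^{\rho(E)-\rho(K)}.
\]
Comparing with the target $(-1)^{d}T(M,f;y,x) = (-1)^d\sum_{K\subset E}\widetilde{f}(K)(x-1)^{|K|-\rho(K)}(y-1)^{\rho(E)-\rho(K)}$, the entire theorem reduces to the identity $\widetilde{f}(E\setminus K) = (-1)^{d}\,\widetilde{f}(K)$ for every $K \subset E$, valid for harmonic $f$.

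The main obstacle, and the genuinely nontrivial step, is establishing this complementation identity for the harmonic extension $\widetilde{f}$. I expect to prove it by a counting/inclusion–exclusion argument using the harmonicity of $f$. By definition $\widetilde{f}(E\setminus K) = \sum_{Z\in E_d,\, Z\subset E\setminus K} f(Z)$, i.e. a sum over the $d$-subsets disjoint from $K$. The strategy is to expand $\sum_{Z\subset E\setminus K} f(Z)$ via inclusion–exclusion over which elements of $K$ a $d$-set is allowed to meet, writing it as an alternating sum of quantities $\sum_{Z\supset X} f(Z)$ for subsets $X\subset K$; Remark~\ref{Rem:Gamma} states precisely that such inner sums $\sum_{Z\in E_d,\, X\subset Z} f(Z)$ vanish whenever $|X|<d$ (and in particular for $|X|=i<d$), which should collapse the alternating sum. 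The bookkeeping of which terms survive — leaving only the contribution governed by the top degree and producing the sign $(-1)^d$ — is the delicate part and where care with the inclusion–exclusion indexing will be essential; this is the step I would write out in full detail while treating the exponent algebra of the first two steps as routine.
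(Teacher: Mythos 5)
Your proposal is correct, and its first two steps---substituting $\rho^{\perp}(J)=|J|+\rho(E\setminus J)-\rho(E)$ and $\rho^{\perp}(E)=|E|-\rho(E)$ into the exponents, then reindexing by $K=E\setminus J$---are exactly the computation in the paper's proof. The one place you diverge is the complementation identity $\widetilde{f}(E\setminus K)=(-1)^{d}\widetilde{f}(K)$: the paper does not prove this inside the theorem but reads it off from Remark~\ref{Rem:BachocLem}, which in turn is the $i=0$ case of Lemma~\ref{Lem:Bachoc} ($f^{(0)}(I)=(-1)^{d}\widetilde{f}(I)$), a lemma imported from Bachoc's paper without proof. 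Your inclusion--exclusion replacement is self-contained and, reassuringly, the bookkeeping you flagged as delicate collapses immediately: writing
\[
	\widetilde{f}(E\setminus K)
	=\sum_{\substack{Z\in E_{d}\\ Z\cap K=\emptyset}} f(Z)
	=\sum_{X\subset K}(-1)^{|X|}\sum_{\substack{Z\in E_{d}\\ X\subset Z}} f(Z)
\]
(for fixed $Z$ the right-hand coefficient is $\sum_{X\subset K\cap Z}(-1)^{|X|}=(1-1)^{|K\cap Z|}$, nonzero only when $Z\cap K=\emptyset$), the inner sum vanishes for $0\leq |X|<d$ by harmonicity (Remark~\ref{Rem:Gamma}), is empty for $|X|>d$, and for $|X|=d$ forces $Z=X$; so only the top layer survives, giving $(-1)^{d}\sum_{X\subset K,\ |X|=d}f(X)=(-1)^{d}\widetilde{f}(K)$, with the degenerate case $d=0$ trivially consistent. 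In short, your route proves the sign identity directly from the definition of $\Harm_{d}$ at the cost of a few lines, whereas the paper's route is shorter only because it invokes the stronger quoted lemma; the surrounding exponent algebra is identical in both.
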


Before giving a proof of the above theorem, %\ref{Thm:HarmTutteMacIden.} 
we need to know about the following technical lemma on harmonic functions from~\cite{Bachoc}. 

\begin{lem}[\cite{Bachoc}]\label{Lem:Bachoc}
	Let $f \in \Harm_{d}(n)$ and $J \subset E$.
	Let
	\[
		f^{(i)}(J)
		:=
		\sum_{\substack{Z \in E_{d},\\ |J \cap Z| = i}}
		f(Z).
	\]
	Then for all 
	$0 \leq i \leq d$,
	$f^{(i)}(J) = (-1)^{d-i} \binom{d}{i} \widetilde{f}(J)$.
\end{lem}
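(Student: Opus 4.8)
The plan is to reduce the statement to a single family of linear relations among the quantities $f^{(i)}(J)$, $0 \le i \le d$, and then invert that system. First I would record the trivial end of the statement: since $Z \subset J$ is the same as $|J \cap Z| = |Z| = d$, we have $f^{(d)}(J) = \widetilde{f}(J)$, so the case $i = d$ is exactly the definition and will serve as the base of an induction. The real content is to pin down $f^{(i)}(J)$ for $i < d$ in terms of $f^{(d)}(J) = \widetilde{f}(J)$, and the only structural input available is the harmonicity of $f$, which Remark~\ref{Rem:Gamma} repackages as $\sum_{Z \in E_{d},\, X \subset Z} f(Z) = 0$ for every $X$ with $|X| < d$.

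Next I would produce the linear relations. Fix $i$ with $0 \le i \le d-1$ and sum the harmonic identity of Remark~\ref{Rem:Gamma} over all $X \in E_{i}$ with $X \subset J$; each summand vanishes, so the total is $0$. Interchanging the order of summation, a given $Z \in E_{d}$ is counted once for each $i$-subset $X \subset J \cap Z$, i.e.\ with multiplicity $\binom{|J \cap Z|}{i}$. Grouping the sets $Z$ according to the value $j = |J \cap Z|$ then turns the double sum into $\sum_{j=i}^{d} \binom{j}{i} f^{(j)}(J)$, giving
\[
	\sum_{j=i}^{d} \binom{j}{i} f^{(j)}(J) = 0, \qquad 0 \le i \le d-1.
\]
This is a unitriangular system expressing that the ``binomial moments'' of the sequence $\bigl(f^{(j)}(J)\bigr)_{j}$ vanish below level $d$.

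Finally I would solve for $f^{(i)}(J)$ by downward induction on $i$, starting from the known value $f^{(d)}(J) = \widetilde{f}(J)$. Assuming the claimed formula for all $j > i$, the displayed relation gives $f^{(i)}(J) = -\sum_{j=i+1}^{d} \binom{j}{i}(-1)^{d-j}\binom{d}{j}\,\widetilde{f}(J)$, and it then remains to verify the scalar identity $\sum_{j=i}^{d} (-1)^{d-j}\binom{d}{j}\binom{j}{i} = 0$ for $i < d$. This follows from the subset-of-a-subset rule $\binom{d}{j}\binom{j}{i} = \binom{d}{i}\binom{d-i}{j-i}$ together with $\sum_{k=0}^{m} (-1)^{k}\binom{m}{k} = 0$ for $m = d-i > 0$, after the substitution $k = j-i$. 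I expect the only genuine bookkeeping obstacle to be this binomial inversion---matching signs and summation ranges in the alternating sum---whereas the combinatorial step of switching summation order and counting the multiplicity $\binom{|J \cap Z|}{i}$ is routine. As an alternative to the explicit induction one could instead invoke the known inverse of the lower-triangular binomial matrix, but carrying out the inversion by hand keeps the argument self-contained.
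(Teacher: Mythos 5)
Your proof is correct. Note that the paper itself offers no proof of this lemma---it is quoted from Bachoc's paper \cite{Bachoc}---so there is no in-paper argument to compare against; your reconstruction is the standard one and is complete. The two pillars both check out: summing the harmonicity relation of Remark~\ref{Rem:Gamma} over all $X \in E_{i}$ with $X \subset J$ and exchanging summation does count each $Z$ with multiplicity $\binom{|J \cap Z|}{i}$, yielding the unitriangular system $\sum_{j=i}^{d} \binom{j}{i} f^{(j)}(J) = 0$ for $0 \le i \le d-1$ (vacuously valid even when $|J| < i$, since then all $f^{(j)}(J)$ with $j \ge i$ vanish); and the downward induction from the base case $f^{(d)}(J) = \widetilde{f}(J)$ closes via the identity $\sum_{j=i}^{d} (-1)^{d-j}\binom{d}{j}\binom{j}{i} = \binom{d}{i}(-1)^{d-i}(1-1)^{d-i} = 0$ for $i < d$, exactly as you computed with the subset-of-a-subset rule. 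One cosmetic remark: the statement of Remark~\ref{Rem:Gamma} in the paper has garbled exponent indexing ($\gamma^{i-1}$), but the displayed consequence you invoke---$\sum_{Z \in E_{d},\, X \subset Z} f(Z) = 0$ for $X \in E_{i}$, $0 \le i \le d-1$---is the correct one, so your reliance on it is sound.
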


\begin{lem}[\cite{Tanabe2001}]\label{Rem:BachocLem}
	Let $f \in \Harm_{d}(n)$, and let $J \in 2^{E}$. Then
	$\widetilde{f}(J) = (-1)^{d} \widetilde{f}(E\setminus J)$. 
	Furthermore, $\widetilde{f}(J) = 0$ if $|J| > n-d$. 
\end{lem}
\begin{proof}
	Let $I,J \in 2^{E}$ such that $I = E\setminus J$. Then
	\begin{align}\label{Equ:Tanabe}
		\widetilde{f}(J)
		=
		\sum_{\substack{Z \in E_{d},\\Z \subset J}}
		f(Z)
		=
		\sum_{\substack{Z \in E_{d},\\ |Z \cap I|=0}}
		f(Z)
		=
		f^{(0)}(I)
		=
		(-1)^{d} \widetilde{f}(E\setminus J),		
	\end{align}
	which is immediate by Lemma~\ref{Lem:Bachoc}.
	Again, if $|J| > n-d$, then $|I| = |E\setminus J| < d$. 
	Therefore, by (\ref{Equ:TildeF}) and (\ref{Equ:Tanabe})
	we have $\widetilde{f}(J) = 0$.
\end{proof}

\begin{proof}[Proof of Theorem~\ref{Thm:HarmTutteMacIden.}]
	Let $M$ be a matroid on $E$ with rank function $\rho$.
	Then $M^{\ast}$ is the dual matroid of $M$ with
	rank function 
	$\rho^{\ast}(J) = |J|+\rho(E\setminus J)-\rho(E)$
	for any $J \subset E$.
	Therefore,
	\begin{align*}
		T(M^{\ast},f;x,y) 
		& =
		\sum_{J \subset E}
		\widetilde{f}(J)
		(x-1)^{\rho^{\ast}(E)-\rho^{\ast}(J)}
		(y-1)^{|J|-\rho^{\ast}(J)}\\
		& =
		\sum_{J \subset E}
		\widetilde{f}(J)
		(x-1)^{\rho^{\ast}(E)-|J|-\rho(E \setminus J) + \rho(E)}
		(y-1)^{|J|-|J|-\rho(E \setminus J) + \rho(E)}\\
		& =
		\sum_{J \subset E}
		\widetilde{f}(J)
		(x-1)^{|E\setminus J|-\rho(E\setminus J)}
		(y-1)^{\rho(E)-\rho(E\setminus J)}\\
		& =
		(-1)^{d} 
		\sum_{J \subset E}
		\widetilde{f}(E\setminus J)
		(y-1)^{\rho(E)-\rho(E\setminus J)}
		(x-1)^{|E\setminus J|-\rho(E\setminus J)}\\
		& =
		(-1)^{d}
		T(M,f;y,x).
	\end{align*}
	This completes the proof.
\end{proof}

\begin{ex}\label{Ex:HarmTutt}
	We assume that $E = \{1,2,3\}$. 
	From Example~\ref{Ex:HarmonicFunction}
	we have  
	\[
	\Harm_{1}(3)
	\ni
	f
	=
	a \{1\} + b \{2\} - (a+b) \{3\}
	\] 
	is a harmonic function of degree~$1$,
	where $f(\{1\}) = a$, $f(\{2\}) = b$ and $f(\{3\}) = -(a+b)$.
	Let $M_{C} = (E,\I)$ be a matroid corresponding to a $[3,2]$ code~$C$
	that is discussed in Example~\ref{Ex:Matroid}.
	Now we can easily find the ranks of all the subsets of $E$ as follows:
	
	\begin{table}[h!]
		\begin{center}
			\begin{tabular}{l l l l}
				$\rho(\emptyset) = 0$, & $\rho(\{1\}) = 1$, & $\rho(\{2\}) = 1$, & $\rho(\{3\}) = 1$\\
				$\rho(\{1,2\}) = 1 $, & $\rho(\{1,3\}) = 2$, & $\rho(\{2,3\}) = 2$, & $\rho(\{1,2,3\}) = 2$.
			\end{tabular}
		\end{center}
		\label{Tab:rank}	
	\end{table} 
	
	By direct calculation, we have
	\begin{align*}
		T(M_{C},f;x,y)
		& =
		\sum_{J \subset E}
		\widetilde{f}(J)
		(x-1)^{\rho(E)-\rho(J)}
		(y-1)^{|J|-\rho(J)}\\
		& =
		(a+b) (x-1) (y-1)
		- (a+b).
	\end{align*}
\end{ex}

\subsection{Harmonic Weight Enumerator}\label{Sec:HarmWeightEnum}

In this section, 
%we review~\cite{Bachoc} for the concept of the harmonic weight enumerators. 
we introduce a new approach to define the harmonic weight enumerators of an $\FF_{q}$-linear code. 
This formulation is inspired by~Jurrius and Pellikaan~\cite{JP2013}.

\begin{comment} 
%Harmonic weight enumerators were used by Bachoc \cite{Bachoc} and 
%Bannai et al.~\cite{Bannai-Koike-Shinohara-Tagami}.
For the reader's convenience, we quote 
the definitions and properties of discrete harmonic functions from \cite{Bachoc,Delsarte}. 
\end{comment}

\begin{df}
	Let $E$ be a finite set of cardinality~$n$. 
	Again let $C$ be an $\FF_{q}$-linear code of length~$n$.
	Then for an arbitrary subset $J \subset E$,
	%$[n] := \{1,2,\ldots, n\}$ 
	we define
	\begin{align*}
		C(J) 
		& := 
		\{
			\bm{c} \in C 
			\mid 
			c_{j} = 0 \text{ for all } j \in J
		\}\\
		\ell(J) 
		& := \dim C(J)\\
		B_{J} 
		& := 
		q^{\ell(J)}-1. 
	\end{align*}
\end{df}

\begin{lem}[\cite{JP2013}]\label{LemRank}
	Let $C$ be an $[n,k]$ linear code with generator matrix~$G$. 
	Assume that the columns of $G$ are indexed by the set $E$.
	Let $G_{J}$ be the $k \times t$ submatrix of $G$ consisting of the columns of $G$ indexed by $J \in E_{t}$, 
	and let $\rho(J)$ be the rank of $G_{J}$. 
	Then $\ell(J) = k - \rho(J)$.
	%	$\rho(J) = k - \ell(J)$.
\end{lem}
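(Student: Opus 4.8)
The plan is to exploit the fact that $C$ is the row space of its generator matrix $G$ and to reinterpret the vanishing condition defining $C(J)$ as a kernel condition, after which the claim follows from the rank--nullity theorem. First I would introduce the linear map $\phi \colon \FF_{2}^{k} \to C$ sending a row vector $\bm{x}$ to $\bm{x}G$. Since $C$ is an $[n,k]$ code, its generator matrix $G$ has full row rank $k$, so $\phi$ is a linear isomorphism onto $C$; in particular, dimensions are preserved when passing through $\phi$ or $\phi^{-1}$.

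Next I would translate the defining condition of $C(J)$ into linear-algebraic terms. For a codeword $\bm{c} = \bm{x}G$, the $j$-th coordinate $c_{j}$ equals the inner product of $\bm{x}$ with the $j$-th column of $G$. Hence the requirement $c_{j} = 0$ for all $j \in J$ is equivalent to $\bm{x}G_{J} = \0$, where $G_{J}$ is the $k \times t$ submatrix of columns of $G$ indexed by $J$. Consequently, $\phi^{-1}(C(J))$ is precisely the left null space $\{\bm{x} \in \FF_{2}^{k} \mid \bm{x}G_{J} = \0\}$ of $G_{J}$.

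Finally I would apply rank--nullity to the linear map $\bm{x} \mapsto \bm{x}G_{J}$ from $\FF_{2}^{k}$ to $\FF_{2}^{t}$. Its image is the row space of $G_{J}$, which has dimension $\rho(J)$, the rank of $G_{J}$; therefore its kernel has dimension $k - \rho(J)$. Combining this with the fact that $\phi$ is a dimension-preserving isomorphism gives
\[
	\ell(J) = \dim C(J) = \dim \phi^{-1}(C(J)) = k - \rho(J),
\]
as desired.

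I do not anticipate a serious obstacle, as the argument is elementary linear algebra over $\FF_{2}$. The only points requiring a little care are the bookkeeping of conventions---working consistently with row vectors and left multiplication, so that the relevant object is the \emph{left} null space of $G_{J}$---and the explicit use of the full row rank of $G$, which is exactly what guarantees that $\phi$ is injective and hence dimension-preserving.
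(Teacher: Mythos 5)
Your proof is correct, and it is exactly the standard argument: the paper itself gives no proof of this lemma, quoting it directly from Jurrius--Pellikaan \cite{JP2013}, where it is established by the same rank--nullity computation you give (identifying $C(J)$ via $\bm{x}\mapsto\bm{x}G$ with the left null space of $G_{J}$). Your attention to the two conventions---full row rank of $G$ making $\bm{x}\mapsto\bm{x}G$ an isomorphism, and row rank equals column rank so that the image of $\bm{x}\mapsto\bm{x}G_{J}$ has dimension $\rho(J)$---covers the only points where care is needed.
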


Now we have the following proposition.

\begin{prop}\label{Prop:Connection}
	Let $f \in \Harm_{d}(n)$ and $J \subset E$.
	Define
	\[
		B_{t,f} 
		:= 
		\sum_{J \in E_{t}} 
		\widetilde{f}(J)B_{J}.
	\]
	Then
%	, for all 
%	$0 \leq t \leq n$, 
	we have the following relation between $B_{t,f}$ and $A_{i,f}$ as follows:
	\[
		B_{t,f} 
		= 
		(-1)^{d}
		\sum_{i=d}^{n-t} 
		\binom{n-d-i}{t-d} A_{i,f},	
	\]
	if $d \leq t \leq n-d$;
	otherwise $B_{t,f} = 0$.
\end{prop}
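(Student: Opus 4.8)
The plan is to unfold the definition of $B_{t,f}$ and read $B_{J}=2^{\ell(J)}-1$ combinatorially. Since $C(J)$ is the $\FF_{2}$-subspace of codewords vanishing on $J$, it has $2^{\ell(J)}$ elements, so $B_{J}$ counts exactly the \emph{nonzero} codewords $\bm c$ with $\supp(\bm c)\cap J=\emptyset$. Substituting this into the definition of $B_{t,f}$ and interchanging the two (finite) summations, I would rewrite
\[
	B_{t,f}
	=\sum_{\bm c\in C\setminus\{\0\}}\ \sum_{\substack{J\in E_{t}\\ J\subseteq E\setminus\supp(\bm c)}}\widetilde f(J).
\]
This moves all the combinatorics onto the inner sum, in which the codeword $\bm c$ enters only through the size $w:=\wt(\bm c)$ of its support.

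Next I would evaluate the inner sum $\sum_{J\in E_{t},\,J\subseteq S}\widetilde f(J)$ for the fixed set $S:=E\setminus\supp(\bm c)$ of cardinality $n-w$. Expanding $\widetilde f(J)=\sum_{Z\in E_{d},\,Z\subseteq J}f(Z)$ by its definition~\eqref{Equ:TildeF} and swapping the order of summation again, each $Z\in E_{d}$ with $Z\subseteq S$ is counted once for every $J$ with $Z\subseteq J\subseteq S$ and $|J|=t$, of which there are $\binom{n-w-d}{t-d}$. Hence the inner sum equals $\binom{n-w-d}{t-d}\,\widetilde f(S)=\binom{n-w-d}{t-d}\,\widetilde f(E\setminus\supp(\bm c))$. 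At this point I would invoke the complementation identity of Remark~\ref{Rem:BachocLem}, namely $\widetilde f(E\setminus J)=(-1)^{d}\widetilde f(J)$, to replace $\widetilde f(E\setminus\supp(\bm c))$ by $(-1)^{d}\widetilde f(\supp(\bm c))$. This is the step where the harmonicity of $f$ is genuinely used, through Lemma~\ref{Lem:Bachoc} on which that remark rests, and it is what produces the global sign $(-1)^{d}$.

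Combining the two steps gives
\[
	B_{t,f}=(-1)^{d}\sum_{\bm c\in C\setminus\{\0\}}\binom{n-\wt(\bm c)-d}{t-d}\,\widetilde f(\supp(\bm c)),
\]
and grouping the codewords by their common weight $i=\wt(\bm c)$ turns the inner factor into $A_{i,f}=\sum_{\wt(\bm c)=i}\widetilde f(\supp(\bm c))$, yielding $B_{t,f}=(-1)^{d}\sum_{i}\binom{n-i-d}{t-d}A_{i,f}$. Finally I would pin down the effective range of $i$: by Remark~\ref{Rem:BachocLem} we have $A_{i,f}=0$ whenever $i<d$, while the combinatorial binomial $\binom{n-i-d}{t-d}$ vanishes unless both $t\ge d$ and $i\le n-t$. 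Thus only the indices $d\le i\le n-t$ survive, a range that is nonempty precisely when $d\le t\le n-d$, recovering the stated formula; and when $t<d$ or $t>n-d$ every remaining term is forced to be zero, so $B_{t,f}=0$.

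I expect the delicate points to be bookkeeping rather than conceptual: one must justify the two interchanges of summation (both are finite, hence harmless) and track carefully that it is exactly the convention $\binom{m}{k}=0$ for $k<0$ or $m<k$ that enforces the vanishing of $B_{t,f}$ outside the band $d\le t\le n-d$. The one substantive ingredient is the complementation identity $\widetilde f(E\setminus J)=(-1)^{d}\widetilde f(J)$; everything else is double counting. Note that Lemma~\ref{LemRank} is not needed for this proposition—it will instead serve to link $B_{t,f}$ to the harmonic Tutte polynomial in the Greene-type identity.
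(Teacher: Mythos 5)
Your proof is correct and follows essentially the same double-counting strategy as the paper's: expand $B_{J}=2^{\ell(J)}-1$ as the number of nonzero codewords supported off $J$, interchange the summations, count the $t$-sets $J$ squeezed between a $d$-set and $E\setminus\supp(\bm{c})$ to produce $\binom{n-i-d}{t-d}$, and conclude with the complementation identity $\widetilde{f}(E\setminus J)=(-1)^{d}\widetilde{f}(J)$ of Remark~\ref{Rem:BachocLem}. Your bookkeeping is in fact a little cleaner than the paper's, which applies the complementation at the outset, carries a redundant summation over $d$-subsets $X\subset J$ normalized by $\binom{t}{d}^{-1}$, and invokes Remark~\ref{Rem:Gamma} separately to dispose of codewords of weight less than $d$ --- a case your argument absorbs automatically, since $\widetilde{f}$ vanishes on sets of size exceeding $n-d$.
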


\begin{proof}
	It is immediate from (\ref{Equ:TildeF}) 
	%the definition of $\widetilde{f}$ 
	and Lemma~\ref{Rem:BachocLem} that
	$B_{t,f} = 0$ for $0 \leq t < d$ and~$n-d < t \leq n$.
	We now focus on $t$ with 
	%only need to show for 
	$d \leq t \leq n-d$.
%	\[
%		B_{t,f}
%		=
%		(-1)^{d}
%		\sum_{i=d}^{n-t} 
%		\binom{n-d-i}{t-d} A_{i,f}.
%	\]
%	From the construction of $B_{t,f}$ and 
	By Lemma~\ref{Rem:BachocLem},
%	it is clear that
	\begin{align*}
		B_{t,f} 
		& = 
		\sum_{J \in E_{t}} 
		\widetilde{f}(J)B_{J}\\
		& =
		(-1)^{d}
		\sum_{J\in E_{t}}
		\widetilde{f}(E\setminus J) B_{J}\\
		& =
		(-1)^{d}
		{\binom{t}{d}}^{-1}
		\sum_{\substack{J \in E_{t}, X \in E_{d},\\ X \subset J}} 	\widetilde{f}(E\setminus J)B_{J}.
	\end{align*}
	Therefore, it is sufficient to show that for $d \leq t \leq n-d$,
	\[
		{\binom{t}{d}}^{-1}
		\sum_{\substack{J \in E_{t}, X \in E_{d},\\ X \subset J}} 
		\widetilde{f}(E\setminus J)B_{J}
		=
		\sum_{i=d}^{n-t} 
		\binom{n-d-i}{t-d} A_{i,f}.
	\] 
	Now following the definition of $B_{J}$, we can easily observe that
	\begin{align*}
		\sum_{\substack{J \in E_{t}, X \in E_{d},\\ X \subset J}} 
		\widetilde{f}(E\setminus J)
		B_{J}
		& =
		\sum_{\substack{J \in E_{t}, X \in E_{d},\\ X \subset J}} 
		\sum_{\substack{\bm{c} \in C,\\ \supp(\bm{c}) \cap J = \emptyset,\\ \bm{c} \neq \bm{0}}}
		\widetilde{f}(E\setminus J)\\
		& =
		\sum_{\substack{\bm{c} \in C, \\ \bm{c} \neq \bm{0}}}
		\sum_{\substack{J \in E_{t}, X \in E_{d},\\ 
				\supp(\bm{c})\cap J = \emptyset,\\ X \subset J}}
		\widetilde{f}(E\setminus J)\\
		& =
		\sum_{w = 1}^{n-t}
		\sum_{\substack{\bm{c} \in C, \\ \wt(\bm{c}) = w}}
		\sum_{\substack{J \in E_{t}, X \in E_{d},\\ 
				\supp(\bm{c})\cap J = \emptyset,\\ X \subset J}}
		\widetilde{f}(E\setminus J).
	\end{align*}
%	Note that from the definition of harmonic function and 
	For $X \in E_{d}$ and $\bm{c} \in C$, let
	$$A_{X}(\bm{c}) 
		:= 
		\{
			J \in E_{t} 
			\mid 
			X \subset J 
			\mbox{ and } 
			\supp(\bm{c}) \cap J 
			= 
			\emptyset
		\}.$$
	Then from (\ref{Equ:TildeF}) we have
	\begin{align*}
		\sum_{\substack{J \in E_{t}, X \in E_{d},\\ 
				\supp(\bm{c})\cap J = \emptyset,\\ X \subset J}}
		\widetilde{f}(E\setminus J)
		& =
		\sum_{\substack{X \in E_{d},\\ \supp(\bm{c})\cap X = \emptyset}}
		\sum_{J\in A_{X}(\bm{c})}
		\widetilde{f}(E\setminus J)\\
		& =
		\sum_{\substack{X \in E_{d},\\ \supp(\bm{c}) \cap X = \emptyset}}
		\sum_{J\in A_{X}(\bm{c})}
		\sum_{\substack{Z \in E_{d},\\ Z \subset E\setminus J}}
		f(Z)\\
		& =
		\sum_{\substack{T \in E_{t},\\\supp(\bm{c}) \cap T = \emptyset}}
		\sum_{\substack{X \in E_{d},\\ X \subset T}}
		\sum_{J\in A_{X}(\bm{c})}
		\sum_{\substack{Z \in E_{d},\\ Z \subset E\setminus T}}
		f(Z)\\
		& =
		\binom{t}{d}
		\binom{n-d-w}{t-d}
		\sum_{\substack{T \in E_{t},\\ \supp(\bm{c}) \cap T = \emptyset}}
		\sum_{\substack{Z \in E_{d},\\ Z \subset E\setminus T}}
		f(Z).
	\end{align*}
	For $\bm{c} \in C$ with $\wt(\bm{c}) = w$ such that $0< w < d$, 
	by Remark~\ref{Rem:Gamma}, 
	%and Remark~\ref{Rem:New} together with (\ref{Equ:TildeF}), 
	we have 
	\begin{align*}
	& \sum_{\substack{T \in E_{t},\\T \cap \supp(\bm{c}) = \emptyset}}
	\sum_{\substack{Z \in E_{d},\\ Z \subset E\setminus T}}
	f(Z)\\
	& =
	\sum_{i=0}^{w}
	\sum_{\substack{Y\in E_{i},\\ Y \subset \supp(\bm{c})}}
	\binom{w}{i}
	\binom{n-t-w}{d-i}
	\sum_{\substack{Z \in E_{d},\\ Y \subset Z}}
	f(Z)\\
	& =
	0.
	\end{align*}
	Now for $\bm{c} \in C$ with $\wt(\bm{c}) = w$ such that $d \leq w \leq n-t$, 
	by Remark~\ref{Rem:Gamma}, 
	%and Remark~\ref{Rem:New} with (\ref{Equ:TildeF}), 
	we have
	\begin{align*}
		&\sum_{\substack{T \in E_{t},\\T \cap \supp(\bm{c}) = \emptyset}}
		\sum_{\substack{Z \in E_{d},\\ Z \subset E\setminus T}}
		f(Z)\\
		& =
		\sum_{\substack{Z \in E_{d},\\ Z \subset \supp(\bm{c})}}
		f(Z)
		+
		\sum_{i=0}^{d-1}
		\sum_{\substack{Y\in E_{i},\\ Y \subset \supp(\bm{c})}}
		\binom{w}{i}
		\binom{n-t-w}{d-i}
		\sum_{\substack{Z \in E_{d},\\ Y \subset Z}}
		f(Z)\\
		& = 
		\sum_{\substack{Z \in E_{d},\\ Z \subset \supp(\bm{c})}}
		f(Z)
		+ 
		0\\
		& =
		\acute{f}(\bm{c})	
	\end{align*}
	Therefore, 
	\begin{align*}
		\sum_{\substack{J \in E_{t}, X \in E_{d},\\ X \subset J}} 
		\widetilde{f}(E\setminus J)
		B_{J}
		& =
		\sum_{w = d}^{n-t}
		\sum_{\substack{\bm{c} \in C,\\ \wt(\bm{c}) = w}}
		\binom{t}{d}
		\binom{n-d-w}{t-d}
		\acute{f}(\bm{c})\\
		& =
		\sum_{w = d}^{n-t}
		\binom{t}{d}
		\binom{n-d-w}{t-d}
		A_{i,f}.
	\end{align*}
This completes the proof.
\end{proof}

\begin{ex}\label{Ex:Proposition}
	Let $C$ be a binary linear code of length~$3$ given in 
	Example~\ref{Ex:HarmWeight}. 
	We consider $f \in \Harm_{1}(3)$ and its $\widetilde{f}$'s
	as in Example~\ref{Ex:HarmonicFunction}. 
	Now we compute $B_{t,f}$ for $1 \leq t \leq 2$ in two ways: 
	by direct computation and by using Proposition~\ref{Prop:Connection}.
	
	\begin{equation*}
		\begin{aligned}[t] 
			B_{1,f}
			& =
			\sum_{J\in E_{1}} 
			\widetilde{f}(J)
			B_{J}\\
%			\widetilde{f}(1)
%			B_{1}
%			+
%			\widetilde{f}(2)
%			B_{2}
%			+
%			\widetilde{f}(3)
%			B_{3}\\
			& =
			a.1 + b.1 - (a + b).1\\
			& =
			0
		\end{aligned}\qquad
		\begin{aligned}[t]
			B_{1,f} & = 
			(-1)^{1}
			\sum_{i = 1}^{2}
			\binom{2-i}{0}
			A_{i,f}\\
			& =
			- 
			A_{1,f}
			- 
			A_{2,f}\\
			& =
			(a + b) 
			-(a + b)\\
			& =
			0		
		\end{aligned}
	\end{equation*}
	\begin{equation*}
		\begin{aligned}[t]
			B_{2,f}
			& =
			\sum_{J\in E_{2}} 
			\widetilde{f}(J)
			B_{J}\\
%			\widetilde{f}(12)
%			B_{12}
%			+
%			\widetilde{f}(13)
%			B_{13}
%			+
%			\widetilde{f}(23)
%			B_{23}\\
			& =
			(a + b).1 -b.0 - a.0\\
			& =
			a+b
		\end{aligned}\qquad
		\begin{aligned}[t]
			B_{2,f}
			& = 
			(-1)^{1}
			\sum_{i = 1}^{1}
			\binom{2-i}{1}
			A_{i,f}\\
			& =
			- 
			A_{1,f}\\
			& =
			a+b		
		\end{aligned} 
	\end{equation*}
\end{ex}

Now we have the following result.

\begin{thm}\label{Thm:reinter}
	Let $C$ be an $\FF_{q}$-linear code of length~$n$, and let $f \in \Harm_{d}(n)$.
	Then
	\[
		Z_{C,f}(x,y)
		= 
		(-1)^{d}
		\sum_{t=d}^{n-d} 
		B_{t,f} 
		(x-y)^{t-d} y^{n-t-d}.
	\]
\end{thm}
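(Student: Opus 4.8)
The plan is to start from the right-hand side, insert the closed form for $B_{t,f}$ supplied by Proposition~\ref{Prop:Connection}, interchange the two summations, and recognise the resulting inner sum as a binomial expansion.

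First I would substitute $B_{t,f} = (-1)^{d}\sum_{i=d}^{n-t}\binom{n-d-i}{t-d}A_{i,f}$ into the sum over $t$, which we may take to run from $d$ to $n-d$ since $B_{t,f}$ vanishes outside this range by Proposition~\ref{Prop:Connection}. The two factors of $(-1)^{d}$ then combine to $(-1)^{2d}=1$, leaving
\[
	(-1)^{d}\sum_{t=d}^{n-d} B_{t,f}(x-y)^{t-d}y^{n-t-d}
	=
	\sum_{t=d}^{n-d}\sum_{i=d}^{n-t}\binom{n-d-i}{t-d}A_{i,f}(x-y)^{t-d}y^{n-t-d}.
\]

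Next I would interchange the order of summation. The constraints $d\le t\le n-d$ and $d\le i\le n-t$ are jointly equivalent to $d\le i\le n-d$ and $d\le t\le n-i$, so fixing $i$ first the index $t$ runs from $d$ to $n-i$. After the swap I would set $s:=t-d$ and factor out $y^{i-d}$, using $n-t-d=(i-d)+(n-d-i-s)$, to write the inner sum as
\[
	y^{i-d}\sum_{s=0}^{n-d-i}\binom{n-d-i}{s}(x-y)^{s}y^{\,n-d-i-s}.
\]

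The crucial step is the binomial theorem: with $m:=n-d-i$ the inner sum is $\bigl((x-y)+y\bigr)^{m}=x^{m}$, so it collapses to $x^{n-d-i}y^{i-d}$, yielding $\sum_{i=d}^{n-d}A_{i,f}\,x^{n-i-d}y^{i-d}$. Finally, to match the definition $Z_{C,f}(x,y)=\sum_{i=0}^{n}A_{i,f}\,x^{n-i-d}y^{i-d}$, I would invoke Remark~\ref{Rem:BachocLem}, which forces $\widetilde{f}(\bm{u})=0$ whenever $\wt(\bm{u})<d$ or $\wt(\bm{u})>n-d$; hence $A_{i,f}=0$ outside $d\le i\le n-d$ and the two summation ranges agree. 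The only place demanding care is the bookkeeping of the bounds during the interchange; the remainder is the telescoping of a single binomial sum.
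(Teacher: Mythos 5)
Your proof is correct and follows essentially the same route as the paper's: substitute the formula of Proposition~\ref{Prop:Connection} for $B_{t,f}$, interchange the sums over $t$ and $i$, collapse the inner sum via the binomial theorem $((x-y)+y)^{n-d-i}=x^{n-d-i}$, and extend the range of $i$ using $A_{i,f}=0$ for $i<d$ or $i>n-d$. Your bookkeeping of the summation bounds and the explicit appeal to Remark~\ref{Rem:BachocLem} for the vanishing of $A_{i,f}$ are, if anything, slightly more detailed than the paper's own argument.
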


\begin{proof}
	By using Proposition~\ref{Prop:Connection} and using the binomial expansion of 
	$x^{n-i} = ((x-y)+y)^{n-i}$
	we have
	\begin{align*}
		(-1)^{d}
		&
		\sum_{t=d}^{n-d} 
		B_{t,f} 
		(x-y)^{t-d} y^{n-t-d}\\ 
		= & 
		\sum_{t=d}^{n-d} 
		\sum_{i = d}^{n-t} 
		\binom{n-d-i}{t-d}
		A_{i,f} 
		(x-y)^{t-d} y^{n-t-d}\\
		= & 
		\sum_{i=d}^{n-d} A_{i,f} 
		\left( 
			\sum_{t=d}^{n-i} 
			\binom{n-d-i}{t-d} 
			(x-y)^{t-d} y^{(n-d-i)-(t-d)}
		\right) 
		y^{i-d}\\
		= & 
		\sum_{i = d}^{n-d} 
		A_{i,f} 
		x^{n-d-i} y^{i-d}\\
		= &  
		\sum_{i = 0}^{n} 
		A_{i,f} 
		x^{n-i-d} y^{i-d}\\
		= & 
		Z_{C,f}(x,y),
	\end{align*}
	since $A_{i,f} = 0$ for $i<d$ and $i > n-d$.
\end{proof}

\section{Generalized Greene's Identity}\label{Sec:GenGreeneIden}

Let $M_{C}$ be a matroid associated to an $\FF_{q}$-linear code~$C$
of length~$n$.
It is immediate from~\cite{Cameron,Greene1976,JP2013} 
that $(M_{C})^{\ast} = M_{C^{\perp}}$. 
Now we have the following proposition. 
\begin{prop}\label{PropTutteMC}
	Let $C$ be an $[n,k]$ code 
%	over $\FF_{2}$ 
	and $M_{C}$ be its matroid. Let $f$ be a harmonic function of degree $d$. Then 
%	the harmonic Tutte polynomial $T(M_{C},f;x,y)$ is given by
	\[
		T(M_{C},f;x,y) 
		= 
		\sum_{t=d}^{n-d}\sum_{J \in E_{t}} 
		\tilde{f}(J) 
		(x-1)^{\ell(J)} 
		(y-1)^{\ell(J)-(k-t)}.
	\]  
\end{prop}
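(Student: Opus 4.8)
The plan is to start from the definition of the harmonic Tutte polynomial and substitute the two structural facts about $M_C$ that are needed, both of which are already available. Since $C$ is an $[n,k]$ code, its generator matrix $G$ has rank $k$, so the rank function $\rho$ of $M_C$ satisfies $\rho(E) = k$. By Lemma~\ref{LemRank}, $\ell(J) = k - \rho(J)$, equivalently $\rho(J) = k - \ell(J)$, for every $J \subset E$. These are the only matroid-theoretic inputs I would use.

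First I would rewrite the two exponents appearing in
\[
	T(M_C,f;x,y) = \sum_{J \subset E} \widetilde{f}(J)\, (x-1)^{\rho(E)-\rho(J)} (y-1)^{|J|-\rho(J)}
\]
in terms of $\ell(J)$. For the exponent of $(x-1)$ I get $\rho(E) - \rho(J) = k - (k - \ell(J)) = \ell(J)$. For the exponent of $(y-1)$, writing $t := |J|$ and using $\rho(J) = k - \ell(J)$, I get $|J| - \rho(J) = t - (k - \ell(J)) = \ell(J) - (k-t)$. This reproduces exactly the shape of the summand in the claimed formula.

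Next I would reorganize the sum over all $J \subset E$ as a double sum, first over the cardinality $t = |J|$ from $0$ to $n$ and then over $J \in E_t$. At this stage the expression differs from the claim only in the range of $t$: the proposition restricts $t$ to $d \leq t \leq n-d$. To close this gap I would invoke Remark~\ref{Rem:BachocLem}, which (since $f \in \Harm_d$) gives $\widetilde{f}(J) = 0$ whenever $|J| < d$ or $|J| > n-d$. Hence every term with $t < d$ or $t > n-d$ vanishes, and the outer sum truncates to $d \leq t \leq n-d$, yielding the stated formula.

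This is essentially a bookkeeping exercise, so I do not expect a substantive obstacle. The only point demanding a little care is matching the exponent $|J| - \rho(J)$ correctly, since the shift by $(k-t)$ conceals a small cancellation; keeping the dependence on $t$ and the sign of $(k-t)$ straight is where a computational slip would be most likely. Everything else is a direct substitution of Lemma~\ref{LemRank} followed by the vanishing statement of Remark~\ref{Rem:BachocLem}.
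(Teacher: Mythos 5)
Your proof is correct and follows essentially the same route as the paper's: substituting $\rho(E)=k$ and $\rho(J)=k-\ell(J)$ from Lemma~\ref{LemRank} into the definition of $T(M_C,f;x,y)$. The paper's proof is just a one-line citation of these two facts, leaving implicit the truncation of the sum to $d \leq t \leq n-d$ via the vanishing of $\widetilde{f}$ from Remark~\ref{Rem:BachocLem}, which you spell out explicitly.
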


\begin{proof}
	The proposition follows from 
	$\ell(J) = k - \rho(J)$ 
	for any $J \in E_{t}$ by Lemma~\ref{LemRank},
	and
	$\rho(E) = k$.
\end{proof}

Now we have the following harmonic generalization of the Greene's identity.

\begin{thm}\label{thm:Miezaki}
	Let $C$ be an $[n,k]$ code  
	and $f$ be a harmonic function with degree $d$.
	Then
	\[
		Z_{C,f}(x,y)
		= 
		(-1)^{d}
		(x-y)^{k-d} y^{n-k-d} 
		T
		\left(
			M_{C},f; 
			\dfrac{x+(q-1)y}{x-y},
			\dfrac{x}{y}
		\right).
	\]
\end{thm}

\begin{proof}
	By using the Proposition~\ref{PropTutteMC} and Remark~\ref{Rem:New}, we can write
	\begin{align*}
	T
	\left(
		M_{C},f; 
		\dfrac{x+(q-1)y}{x-y},
		\dfrac{x}{y}
	\right)
	& = 
	\sum_{t=d}^{n-d}\sum_{J \in E_{t}} 
	\widetilde{f}(J)
	\left(
		\dfrac{qy}{x-y}
	\right)^{\ell(J)} 
	\left(
		\dfrac{x-y}{y}
	\right)^{\ell(J)-(k-t)}\\
	& = 
	\sum_{t=d}^{n-d}\sum_{J \in E_{t}} 
	\widetilde{f}(J)q^{\ell(J)}
	\left(
		\dfrac{y}{x-y}
	\right)^{\ell(J)} 
	\left(
		\dfrac{x-y}{y}
	\right)^{\ell(J)-(k-t)}\\
	& = 
	\sum_{t=d}^{n-d}
	\sum_{J \in E_{t}} 
	\widetilde{f}(J)((q^{\ell(J)}-1)+1)
	(x-y)^{-(k-t)} y^{k-t}\\
	& = 
	\sum_{t=d}^{n-d}
	\sum_{J \in E_{t}} 
	\widetilde{f}(J)(B_{J}+1)
	(x-y)^{-(k-t)} y^{k-t}\\
	& = 
	\sum_{t=d}^{n-d}
	\left(
		\sum_{J \in E_{t}} 
		\widetilde{f}(J)B_{J} 
		+
		\sum_{J \in E_{t}} 
		\widetilde{f}(J)
	\right)
	(x-y)^{-(k-t)} y^{k-t}\\
	& = 
	\sum_{t=d}^{n-d}
	\left(
		B_{t,f}
		+
		0
	\right)
	(x-y)^{-(k-t)} y^{k-t}\\
	& = 
	\sum_{t=d}^{n-d}
	B_{t,f}
	(x-y)^{-(k-t)} y^{k-t}.
\end{align*}

Therefore, from Theorem~\ref{Thm:reinter} we have
\begin{align*} 
	(-1)^{d}
%	(xy)^{d}
	(x-y)^{k-d} y^{n-k-d} 
	T
	&
	\left(
		M_{C},f; 
		\dfrac{x+(q-1)y}{x-y},
		\dfrac{x}{y}
	\right)\\
	= & 
	(-1)^{d}
%	(xy)^{d}
	\sum_{t=d}^{n-d}
	B_{t,f}
	(x-y)^{t-d} y^{n-t-d}\\
	= & 
%	W_{C,f}(x,y)
	Z_{C,f}(x,y).
	\end{align*}
This completes the proof.
\end{proof}

Now we give an alternative proof of the $\FF_{q}$-analogue of Bachoc's
MacWilliams type identity (see~\cite{Bachoc}) stated in Theorem~\ref{thm: Bachoc iden.}
as an application of Theorem~\ref{thm:Miezaki}.

\begin{proof}[Proof of Theorem~\ref{thm: Bachoc iden.}]
	Let 
	$C$ be an $[n,k]$ code,
	and $M_{C}$ be its matroid.
	Then
	\begin{align*}
		(-1)^{d} 
		&
		\frac{q^{n/2}}{|C|} 
		Z_{C,f} 
		\left( 
			\frac{x+(q-1)y}{\sqrt{q}}, 
			\frac{x-y}{\sqrt{q}} 
		\right)\\
		& =
		(-1)^{2d}
		\frac{q^{n/2}}{q^{k}}
		\left(\dfrac{qy}{\sqrt{q}}\right)^{k-d}
		\left(\dfrac{x-y}{\sqrt{q}}\right)^{n-k-d}
		T\left(M_{C},f;\dfrac{x}{y},\dfrac{x+(q-1)y}{x-y}\right)\\
		& =
		(x-y)^{n-k-d}
		y^{k-d}
		T
		\left(
			M_{C},f;
			\dfrac{x}{y},
			\dfrac{x+(q-1)y}{x-y}
		\right)\\
		& =
		(-1)^{d}
		(x-y)^{(n-k)-d}
		y^{n-(n-k)-d}
		T
		\left(
			M_{C^{\perp}},f;
			\dfrac{x+(q-1)y}{x-y},
			\dfrac{x}{y}
		\right)\\
		& =
		(-1)^{d}
		(x-y)^{\dim C^{\perp}-d}
		y^{n-\dim C^{\perp}-d}
		T
		\left(
			M_{C^{\perp}},f;
			\dfrac{x+(q-1)y}{x-y},
			\dfrac{x}{y}
			\right)\\
		& =
		Z_{C^{\perp},f}(x,y).
	\end{align*}
Hence Theorem is proved. 
\end{proof}

\section{Concluding Remarks}\label{sec:rem}

Let 
$n,k,t$ and $\lambda$ be non-negative integers 
such that
$n \geq k \geq t$ and~$\lambda \geq 1$.
A $t$-$(n,k,\lambda)$~\emph{design} 
(in short, $t$-design)
is a pair 
$\mathcal{D} := (E,\mathcal{B})$,
where $E$ is a finite set of point of cardinality~$n$,
and $\mathcal{B}$ is a collection of $k$-element
subsets of $E$ called \emph{blocks},
with the property that any~$t$ points are contained 
in precisely $\lambda$ blocks.
%	Some properties of combinatorial $t$-designs obtained from codes 
%	and their analogies were discussed 
Some properties of combinatorial $t$-designs obtained from codes 
were discussed 
in~\cite{{AM1969},{Bachoc},{Bannai-Koike-Shinohara-Tagami},{BS2008},{BRS2009},
	{extremal design H-M-N},{MMN},{extremal design2 M-N},{MN-tec},{RCW1975}} and their analogies in the theory of lattices and vertex operator algebras were discussed 
in~\cite{{Bannai-Koike-Shinohara-Tagami},{BM1},{BM2},{BMY},{Miezaki},{Miezaki2},{MMN}}.

%	in~\cite{{AM1969},{Bachoc},{Bannai-Koike-Shinohara-Tagami},{BM1},{BM2},{%BMY},{BS2008},{BSS2009},
%		{extremal design H-M-N},{Miezaki},{Miezaki2},{MMN},{extremal design2 M-N},{MN-tec},{RCW1975}}. 
%	in~\cite{{Bachoc},{Bannai-Koike-Shinohara-Tagami},{BM1},{BM2},{BMY},
%		{extremal design H-M-N},{Miezaki},{Miezaki2},{MMN},{extremal design2 M-N},{MN-tec},{RCW1975}}. 
%	For the background of lattices we refer to~\cite{CS1999}.

The harmonic functions have many applications; 
particularly, the relations between design theory and 
coding theory were stated in Bachoc~\cite{Bachoc}:
the set of words with fixed weight in a binary code $C$ 
forms a $t$-design if and only if 
$W_{C, f}(x, y) = 0$ for all
$f \in \Harm_{d}(n)$, $1 \leq d \leq t$.
Now we have the following design theoretical remark that gives
an application of the harmonic function connecting
the $t$-designs with matroids.
	
\begin{rem}
	If $T(M_{C},f;x,y) = 0$ for all
	$f \in \Harm_{d}(n)$, $1 \leq d \leq t$,
	then the set of words with fixed weight in an $\FF_{q}$-linear code $C$ 
	forms a $t$-design.
\end{rem}

We will more precisely discuss about a 
relation between matroids and combinatorial designs with respect to ``harmonic Tutte polynomials" in a forthcoming paper. Moreover, we give the generalizations of the results
in this paper in~\cite{BCIM20xx}.

\section*{Acknowledgements}
The authors thank Thomas Britz
%Koji Chinen and Iwan Duursma 
for helpful discussions. 
%and contributions to this research.
The authors would also like to thank the anonymous reviewers 
for their beneficial comments on 
an earlier version of the manuscript. 
The second author is supported by JSPS KAKENHI (22K03277).

\section*{Data availability statement}
The data that support the findings of this study are available from
the corresponding author.

\end{document}